\DeclareMathAlphabet{\mathpzc}{OT1}{pzc}{m}{it}
\theoremstyle{plain}
\newtheorem{theorem}{Theorem}[section]
\newtheorem{proposition}[theorem]{Proposition}
\theoremstyle{definition}
\newtheorem{remark}[theorem]{Remark}
\newtheorem*{theorem*}{Theorem}
\renewcommand{\phi}{\varphi}
\renewcommand{\epsilon}{\varepsilon}
\newcommand{\vertiii}[1]{{\left\vert\kern-0.25ex\left\vert\kern-0.25ex\left\vert #1
		\right\vert\kern-0.25ex\right\vert\kern-0.25ex\right\vert}}
\newcommand{\eps}{\ensuremath{\epsilon}}
\DeclareSymbolFont{bbold}{U}{bbold}{m}{n}
\DeclareSymbolFontAlphabet{\mathbbold}{bbold}
\DeclareMathOperator{\diam}{diam}
\title[]{Topological synchronisation or a simple attractor?}
\date{\today}
\author{Th\'eophile Caby}
\address{CMUP, Departamento de Matem\`atica, Faculdade de Ciências, Universidade do Porto,
Rua do Campo Alegre s/n, 4169007 Porto, Portugal.}
\email{caby.theo@gmail.com}
\author{Michele Gianfelice}
\address{
Dipartimento di Matematica e Informatica
Universit\`a della Calabria
Campus di Arcavacata
Ponte P. Bucci - cubo 30B
I-87036 Arcavacata di Rende}
\email{gianfelice@mat.unical.it}
\author{Beno\^it Saussol}
\address{I2M, Aix Marseille Université,, 13009 Marseille, France}
\email{benoit.saussol@univ-amu.fr}
\author{Sandro Vaienti}
\address{Aix Marseille Université, Université de Toulon, CNRS, CPT, 13009 Marseille, France}
\email{vaienti@cpt.univ-mrs.fr}
\begin{document}
\maketitle
\begin{abstract}
A few recent papers introduced the concept of topological synchronisation. We refer in particular to \cite{TS}, where the theory was illustrated by means of a skew product system, coupling two logistic maps. In this case, we show that the topological synchronisation could be easily explained as the birth of an attractor for increasing values of the coupling strength and the mutual convergence of two marginal empirical measures. Numerical computations based on a careful analysis of the Lyapunov exponents suggest that the attractor supports an absolutely continuous physical measure (acpm). We finally show that for some unimodal maps such acpm exhibit a multifractal structure.
\end{abstract}
\section{Introduction}
The recent paper \cite{TS}, which also garnered some press attention \cite{PP1, PP2},  introduced the concept of topological synchronisation which occurs when, in a dynamical system, it is possible to identify two or more attractors which become very similar when the system evolves. This situation is for instance met in coupled lattice map, where each site of the lattice brings its own attractor.  It is written in \cite{TS} that \emph{during the gradual process of topological
adjustment in phase space, the multifractal structures of each strange attractor of the two coupled
oscillators continuously converge, taking a similar form, until complete topological synchronization
ensues.} As an example of this process of synchronisations, the authors in \cite{TS} studied  a skew system whose base is a logistic (master) map $T$ of the interval $[-1,1]$ and the other map (the slave), is another logistic map on the same interval which is coupled with the master in a convex way in order to be confined to the interval $[-1,1]$. As an indicator of the {\em closeness} of the attractors of the master and slave maps when the coupling strength, say $k,$ increases, the authors in \cite{TS} used the spectrum $D_q$ of generalized dimensions. They showed in particular the interesting phenomenon, which they called the {\em zipper effect}, where the dimensions begin to synchronise at negative $q$, with low values of $k$ before becoming similar, for positive values of $q$, when $k$ arrives at the threshold of complete synchronisation of the attractors. They interpreted this fact by saying that \emph{the road to complete synchronization starts at low coupling with topological synchronization of the
sparse areas in the attractor and continues with topological synchronizations of much more dense areas in the
attractor until complete topological synchronization is reached for high enough coupling}.\\

The object of our note is to show that, in the case of the skew system where the master and the slave map are both in the logistic family, if we denote with $\{ x_n\}_{n \geq 0}$ the trajectory of the master system and with $\{ y_n\}_{n \geq 0}$ that of the slave system,  the topological synchronisation is easily interpreted as the presence of an invariant set in the neighborhood of the diagonal of the square $[-1,1]^{2}$ to which $\{ x_n\}_{n \geq 0}$ and $\{ y_n\}_{n \geq 0}$ converge when the coupling strength tends to $1,$ in the sense of (\ref{ff}). Moreover, we show that the empirical measure computed along the trajectories of the slave system approaches, in the limit as the number of the iterations tends to infinity, the physical measure of the master map. We  compute numerically the Lyapunov exponent of the master map $T$ and we show that it is positive for the parameter values considered in \cite{TS}, which implies that the attractor in the master space is a finite union of intervals. We therefore discuss the real occurrence of a multifractal spectrum for the empirical measure of unimodal maps. We prove the existence of a non-trivial multifractal spectrum for the Benedicks-Carleson type maps investigated in \cite{baladi} and where the invariant density has at most countably many poles. We show that the generalized dimensions are constant and equal to $1$ for $q<2,$ and so in particular for negative $q$ and this explains easily the presence of the zipper effect. We finally  give a toy-model example of an invariant density generating a multifractal spectrum on a Cantor set of poles.

\section{The attractor} The skew system studied in \cite{TS} is defined on the square $[-1,1]^{2}$ and has the form for $0<k<1$
\begin{equation}\label{00}
\begin{cases}
x_{n+1}= T_1(x_n) \\
y_{n+1}=(1-k)T_2(y_n)+k T_1(x_n),
\end{cases}
\end{equation}
where $T_1$ and $T_2$ are two maps of the interval $[-1,1]$ into itself. Set
$$
\Delta_n:=|x_n-y_n|;
$$
it is immediate to see that for any $n\ge 1:$
\begin{equation}\label{MB}
\Delta_n\le 2(1-k)\sup_{i=1,2}\sup_{x\in [-1,1]}|T_i(x)|,
\end{equation}
and therefore the sequences $x_n, y_n$ approach each other when $k\rightarrow 1.$
We now specialize to the example investigated in \cite{TS} and show how to improve the previous bound. The skew system now reads:
\begin{equation}\label{00}
\begin{cases}
x_{n+1}= T(x_n)=c_1(1-2x_n^2) \\
y_{n+1}=(1-k)c_2(1-2y_n^2)+c_1k(1-2x_n^2).
\end{cases}
\end{equation}

We have
$$
\Delta_{n+1}=|(1-k)c_1(1-2x_n^2)-(1-k)c_2(1-2y_n^2)|\le
$$
$$
(1-k)|c_1-2c_1x_n^2-c_2+2c_2y_n^2|.
$$
We add and subtract the term $2c_1y_n^2$ and we easily obtain
$$
\Delta_{n+1}\le (1-k)|(c_1-c_2)(1-2y_n^2)+2c_1(y_n^2-x_n^2)|.
$$
We now put $\Delta_c:=|c_1-c_2|.$  Since $x_n, y_n$ are in the 
 interval $[-1,1]$, we have
$$
\Delta_{n+1}\le (1-k) \Delta_c|1-2y_n^2|+ 4c_1(1-k)\Delta_n\le
 (1-k) \Delta_c+ 4c_1(1-k)\Delta_n.
$$
We now iterate it and we finally get
$$
\Delta_{n+1}\le (1-k)\Delta_c \sum_{l=0}^n (1-k)^l(4c_1)^l+ 
(1-k)^{n+1}(4c_1)^{n+1}.
$$
We then
require 
\begin{equation}\label{e1}
k>1-\frac{1}{4c_1},
\end{equation}
and we define the quantity
\begin{equation}\label{2i}
W_{\infty}(k):=\Delta_c(1-k)\frac
{1}{1-(1-k)4c_1}, \  \text{such that} \ \lim_{k\rightarrow 1} W_{\infty}(k)=0.
\end{equation}

By sending $n\rightarrow \infty$ we finally get 
\begin{equation}\label{ff}
\limsup_{n\rightarrow \infty}\Delta_n\le W_{\infty}(k).
\end{equation}

We now use the following values taken in \cite{TS}:
$$
c_1=0.89, \ c_2=0.8373351.
$$
First of all we note that with these values (\ref{e1}) gives
$
k>0.72,
$
which is consistent with what was used in \cite{TS}. As in the latter we now
take
$
k=0.9,
$
which is the value where, according to \cite{TS}, the system reaches complete topological
synchronization.
By substituting into $W_{\infty}(k)$ we  get
$$
W_{\infty}(0.9)=0.0082,
$$
which implies that the projections $x_n$ and $y_n$ are really very
close. The bound (\ref{MB}) instead gives, still for $k=0.9,$
$$
\sup_{n\ge 1} |\Delta_n|\le (1-k)[c_1+c_2]=0.17273351.
$$
\section{The measures}
In order to justify the closeness of the asymptotic behaviors  of the master and slave dynamics, the paper \cite{TS} uses the spectrum of the generalized dimensions. These dimensions are defined in terms of a probability measure, see, e.g., \cite{DQ} and \cite{pesin, dem} for a rigorous treatment. Roughly speaking, if $\mu$  denotes a probability measure, and $B(x,r)$ a  ball of center $x$ and radius $r$ on the phase space $M$, the generalized dimensions $D_q$ are defined by the following scaling of the correlation integral
\begin{equation}\label{CCII}
\int_M \mu(B(x,r))^{q-1}d\mu \sim r^{D_q(q-1)}, \ r\rightarrow 0.
\end{equation}
The importance of the generalized dimension is that in several cases, see \cite{pesin} and the references therein for rigorous results, if we denote
by 
$$
d_\mu(x):=\lim_{r\rightarrow 0}\frac{1}{\log r}\log \mu(B(x,r)),
$$
the local dimension of the measure $\mu$ at the point $x$, provided the limit exists, then 
\begin{equation}\label{DDII}
D_q(q-1)=\inf_{\alpha}\{q\alpha-f(\alpha)\},
\end{equation}
where $f(\alpha)$ denotes the Hausdorff dimension of the set of points for which $d_\mu(x)=\alpha\footnote{It is worth noticing that in the next section we will compute the $D_q$ spectrum in a few cases by using the characterization (\ref{DDII}) and {\em not} the definition (\ref{CCII}) in terms of the correlation integral.}.$\\
The master map has several invariant probability measures;  we pick one,  namely the {\em physical measure} $\mu$ which is given by the weak limit of the probability measures
\begin{equation}\label{rr}
\mu_n:=\frac1n \sum_{i=0}^{n-1}\delta_{T^ix},
\end{equation}
where $x$ is chosen Lebesgue almost everywhere on the unit interval (see for instance  \cite{MS} Chapter V.1).
In the following we will forget about the initial condition $x,$ provided it is taken Lebesgue almost everywhere, and simply write
$
x_i=T^i(x).
$
The {\em slave} sequence $\{y_n\}_{n\ge 0}$ could be seen as a non-autonomous, or {\em sequential},  dynamical system and it is not clear what probability measure  we should associate to it. We argue that in \cite{TS} the authors used the sequence of probability measures
$$
\nu_n:=\frac1n \sum_{i=0}^{n-1}\delta_{y_i},
$$
where $y_i$ is the point associated to $x_i$ in (\ref{00}). We call $\mu_n$ and $\nu_n$ the {\em empirical measures}. \\ There are now two questions: (i) does the sequence $\nu_n$ converge weakly? and, in the affirmative case, (ii) is that weak limit point equal to $\mu?$  This is in fact what the numerical simulations on the generalized dimensions seem to indicate in \cite{TS}.\footnote{We point out, however, that it is in general not enough to have a weak convergence of the measures to ensure the convergence of the $D_q$ spectrum. Suppose for instance that the master system has an absolutely continuous invariant measure $d\mu(x)=h(x) dx$ and that, for $k$ close enough to 1, so does the measure of the slave system $d\mu_k(x)=h_k(x) dx$. If $h(x)\sim_{x_0} \text{const} |x-x_0|^{\alpha},$ with $-1<\alpha<0$  as, for instance, it is the case for some quadratic map along the orbit of the critical point, then the local dimension of $\mu$ at $x_0$ is $\alpha +1<1$ and it is easily seen (see the detailed computations in the section 4.2) that the $D_q$ spectrum is not constant. Moreover, if we further assume that, for all $k<1$, $h_k$ is a piecewise constant function converging in $L^1$ to $h$, it is easy to see that the $D_q$ spectrum for $\mu_k$ is constant equal to 1 for all $k<1$, so that there is no convergence to the spectrum of the master map.} \\

To study weak convergence, we have to integrate the probability measures against continuous functions defined on the  interval $[-1,1]$. Let $f$ be one of this function; since it is also uniformly continuous, given $\eps>0,$ call $\delta_{\eps}$ the quantity such that $|f(x)-f(y)|<\frac{\eps}{2},$ when $|x-y|<\delta_{\eps}.$ Let $k_{\eps} \in (0,1)$ such that
\begin{equation}
\label{fr}2W_{\infty}(k_{\eps})<\delta_{\eps}.
\end{equation}
For values of $k$ such that $k_{\eps}<k<1$, we define $n_{k,\eps}$ as
$$
(1-k)^{n_{k, \eps}+1}(4c_1)^{n_{k, \eps}+1}\le W_{\infty}(k) ,
$$
such that for all $n> n_{k,\eps}, \Delta_n\le \delta_{\eps}.$ By weak-compactness there will be a subsequence $n_l$ for which $(\nu_{n_l})_{l\ge 1}$ will converge weakly to a probability measure $\mu^*$. Then for any continuous function $f$ on the unit interval and for $n_l$ sufficiently large, say $n_l>n^*$, we have that $|\frac{1}{n_l}\sum_{i=0}^{n_l-1}f(y_i)-\mu^*(f)|\le \eps/2.$ Then
$$
|\mu^*(f)-\mu(f)|\le \left|\frac{1}{n_l}\sum_{i=0}^{n_l-1}f(y_i)-\mu^*(f)\right|+\left|\frac{1}{n_l}\sum_{i=0}^{n_l-1}f(y_i)-\mu(f)\right|
$$
We now estimate the second piece on the right hand side:
$$
\frac{1}{n_l}\sum_{i=0}^{n_l-1}f(y_i)-\mu(f)=
\frac{1}{n_l}\sum_{i=0}^{n_l-1}f(y_i)-\mu(f)+\frac{1}{n_l}\sum_{i=0}^{n_l-1}f(x_i)-\frac{1}{n_l}\sum_{i=0}^{n_l-1}f(x_i).
$$
Now, for $n_l\ge n_{k, \eps}+2$, consider the difference
$$
\frac{1}{n_l}\sum_{i=0}^{n_l-1}[f(y_i)-f(x_i)]=\frac{1}{n_l}\sum_{i=0}^{n_{k, \eps}}[f(y_i)-f(x_i)]+\frac{1}{n_l}\sum_{i=n_{k, \eps}+1}^{n_l-1}[f(y_i)-f(x_i)].
$$
By exploiting the uniform continuity of $f$ on the unit interval we have
$$
\left|\frac{1}{n_l}\sum_{i=n_{k, \eps}+1}^{n_l-1}[f(y_i)-f(x_i)]\right|\le \frac{n_l-n_{k, \eps}-2}{n_l}\eps/2.
$$
The other piece gives
$$
\left|\frac{1}{n_l}\sum_{i=0}^{n_{k, \eps}}[f(y_i)-f(x_i)]\right|\le 2\max|f| \frac{n_{k+\eps}+1}{n_l}.
$$
By sending $l\rightarrow \infty,$ we finally get that $|\mu^*(f)-\mu(f)|\le \eps,$ and this result is independent of the subsequence we choose. We thus have

\begin{proposition}
(i) For any $\eps>0,$ let $k_{\eps} \in (0,1)$ be given as in (\ref{fr}); then for all $k$ such that $k_{\eps}<k<1,$ we have
$$
\left|\limsup_{n\rightarrow \infty}\frac1n\sum_{i=0}^{n-1}f(y_i)-\mu(f)\right|\le \eps.
$$
(ii)  As a consequence we get:
$$
\inf_{0<k<1}\left|\limsup_{n\rightarrow \infty}\frac1n\sum_{i=0}^{n-1}f(y_i)-\mu(f)\right|=0.
$$
\end{proposition}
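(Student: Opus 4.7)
My plan is to combine the contractive bound on $\Delta_n$ derived above with the uniform continuity of $f$ on $[-1,1]$ and the defining property of the physical measure $\mu$. The key observation is that once $k > k_\eps$ and $n > n_{k,\eps}$ are chosen as before, the orbit $\{y_i\}$ shadows $\{x_i\}$ within $\delta_\eps$, so the empirical average of $f$ along the slave orbit is forced to agree with the master-orbit average, which in turn converges to $\mu(f)$.

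Concretely, I would fix $f \in C([-1,1])$ and $\eps > 0$, invoke uniform continuity to choose $\delta_\eps$, then select $k_\eps$ as in (\ref{fr}) and, for each $k \in (k_\eps, 1)$, set up $n_{k,\eps}$ so that $\Delta_i \le \delta_\eps$ for every $i > n_{k,\eps}$. The core computation is the decomposition
\[
\frac{1}{n}\sum_{i=0}^{n-1} f(y_i) - \mu(f) = \frac{1}{n}\sum_{i=0}^{n-1}\bigl(f(y_i)-f(x_i)\bigr) + \Bigl(\frac{1}{n}\sum_{i=0}^{n-1} f(x_i) - \mu(f)\Bigr).
\]
The second summand tends to $0$ for Lebesgue a.e.\ $x$ directly by the definition of the physical measure $\mu$ as the weak limit of the $\mu_n$ in (\ref{rr}). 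The first summand is split at $i = n_{k,\eps}$: the initial block is crudely bounded by $2\|f\|_\infty(n_{k,\eps}+1)/n$, which vanishes in $n$, while each term of the tail is bounded by $\eps/2$ via uniform continuity, giving the same bound $\eps/2$ on the tail average. Taking $\limsup_{n \to \infty}$ yields part (i). Part (ii) then follows immediately by letting $\eps \to 0$, since the admissible interval $(k_\eps, 1)$ is nonempty for every $\eps > 0$, so the infimum is at most $\eps$ for arbitrarily small $\eps$.

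The only real subtlety is the order of limits: one must let $n \to \infty$ with $k > k_\eps$ held fixed so that both the transient contribution $O(n_{k,\eps}/n)$ and the Birkhoff-type error $\frac{1}{n}\sum f(x_i) - \mu(f)$ vanish, \emph{before} letting $\eps \to 0$ (which would push $k_\eps \to 1$ and blow up $n_{k,\eps}$). Everything else is elementary, and in particular no weak-compactness argument or subsequence extraction is strictly needed, since the direct decomposition above already furnishes the estimate for the full $\limsup$.
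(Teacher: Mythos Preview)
Your argument is correct and follows the same decomposition and estimates as the paper: write $\frac{1}{n}\sum f(y_i)-\mu(f)$ as the Birkhoff error $\frac{1}{n}\sum f(x_i)-\mu(f)$ plus $\frac{1}{n}\sum(f(y_i)-f(x_i))$, then cut the latter at $n_{k,\eps}$ and use uniform continuity on the tail. Your observation that the weak-compactness step is dispensable is also right: the paper extracts a subsequence $\nu_{n_l}\to\mu^*$ and shows $|\mu^*(f)-\mu(f)|\le\eps$ for every accumulation point, but the direct $\limsup$ bound you give already delivers the conclusion without that detour.
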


This is the best  result we could get without further information on the system and it justifies the numerical evidence that the empirical measures constructed along the $x$ and $y$ axis become very close to each other when $k\rightarrow 1$.\\

\section{The nature of the master's physical invariant measure}

We said above that $\mu,$ the invariant measure for the master map $T,$ is a physical measure; the paper \cite{TS} claims that such a measure has a multifractal structure for the prescribed values of $c_1,$ where {\em the master  has a dense strange attractor}, [ibid]. Before exploring and commenting such a  possibility, we should remind a few important properties of the quadratic maps: first that they usually depend on a parameter, in our case $c$ since the map in \cite{TS} is of the form
\begin{equation}\label{map}
[-1,1] \ni x \longmapsto T(x)=c(1-2x^2) \in [-1,1],
\end{equation}
with $0<c\le 1$.
We refer in particular to the nice review paper by Thunberg \cite{th}, which contains a clear and exhaustive list of all the relevant results on unimodal maps and a rich bibliography. First of all, we define the attractor $\Omega_c$ of the map $T$ as the unique   set of accumulation
points of the orbit of the point $x,$ whenever this point is chosen Lebesgue almost everywhere.  Then it is well known, see \cite{L} or Theorem 6 in \cite{th}, that for our kind of logistic maps, the attractor could be of three types:\\ (1) an attracting periodic orbit;
(2) a Cantor set of measure zero;
(3) a finite union of intervals with a dense orbit.\\
Still in the quadratic case, we could classify the preceding three different types of attractors in terms of the set of parameters $c$. Following section 2.2 in \cite{th} we have:

(1) $\mathcal{P} := \{c \in \mathbb{R} : \  \Omega_c \ \text{ is a periodic cycle}\}$ is open and dense in parameter space and consists of countably infinitely many nontrivial intervals.

(2) $\mathcal{C} := \{c \in \mathbb{R} : \Omega_c \ \text{ is a Cantor set}\}$  is a completely disconnected set of Lebesgue
measure zero.

(3) $\mathcal{I} := \{c \in \mathbb{R} : \Omega_c \ \text{ is a union of intervals}\}$  is a completely disconnected set of positive
Lebesgue measure.

The physical measures, constructed according to the prescription (\ref{rr}) exist and are parametrized by $c$ in the following way:

(1) If $c\in \mathcal{P}$, the physical  measure  consists of normalized point masses on the periodic cycle $\Omega_c$.

(2) If $c\in \mathcal{C}$, the support of the physical measure  equals the Cantor attractor $\Omega_c$, and it is singular
with respect to Lebesgue measure.

(3) (a) There is a full-measure subset $\mathcal{S}\subset \mathcal{I}$ such that for all $c\in \mathcal{I}$, the physical measure is absolutely continuous with respect to Lebesgue measure and its support  equals the interval
attractor $\Omega_c$.\\
(b) There are uncountably many parameters in $\mathcal{I}\setminus\mathcal{C}$ for  which the physical measure may fail to exist.\\  

We now dispose of a very efficient   numerical test to determine the nature of  a physical  measure. It is based on the following two rigorous results:

(i) the first says that if $T$ has a non-flat critical point, as in our case, and   it admits an  absolutely continuous invariant probability measure $\mu,$ then  it is the weak-limit of the sequence $\mu_n$  given in (\ref{rr}) and therefore it is a physical measure (see Chapter V.1 in \cite{MS}).

(ii) The second result is taken from the paper \cite{ke}. Let us define the number
$$
\lambda_T:=\limsup_{n\rightarrow \infty}\frac1n\log |(T^n)'(x)|.
$$
This quantity  exists for $x$ chosen $Leb$-almost everywhere and it is strictly positive  if and only if $T$ has an absolutely continuous invariant measure.\\
From the joint use of (i) and (ii) it follows immediately  that if we can show that the sequence
\begin{equation}\label{ror}
\lambda_T^n:=\frac1n\log |(T^n)'(x)|=\frac{1}{n}\sum_{i=0}^{n-1}\log|T'(x_i)|
\end{equation}
has a positive limit for $Leb$-a.e. $x,$ then the sequence of empirical measures $\mu_n$ in (\ref{rr})  converges weakly to an  absolutely continuous invariant probability measure and therefore the attractor $\Omega_c$ will be a finite union of intervals and not a Cantor set. In fig. \ref{bif}, we represent the bifurcation diagram of $T$, and its Lyapunov exponent for different values of parameter $c$. This quantity is non-positive whenever the attractor is a periodic cycle or a Cantor set. We computed in particular the limit of $\lambda_T^n$ for $c=c_1=0.89,$ still called $\lambda_T$, and we got a positive value of $\approx 0.35$, confirming the fact that {\em $\mu$ is not supported on a Cantor set}. We performed the same computation  with $c=c_2$ and, denoting from now on by $\tilde{\mu}$ the associated physical measure, there are strong numerical evidences that  it is again absolutely continuous, with a strictly positive Lyapunov exponent.

\section{Multifractal spectrum for absolutely continuous  measures}\label{mss}

\subsection{Multifractal spectrum for unimodal maps}
Let us summarize: by choosing the parameter $c$ with positive (Lebesgue) probability, we could get a periodic cycle or union of intervals. On the other hand Dirac measures with finitely many masses on the periodic cycles cannot have a multifractal spectrum, since $D_q=0$ for all $q$ in that case. Finally the Lyapunov exponent for $c=c_1$ is positive showing that the attractor could not be a Cantor set and the physical measure will be absolutely continuous. The question is, therefore, if such a measure $\mu$ could exhibit  a multifractal spectrum. Let us consider unimodal maps of Benedicks-Carleson type, which are known to preserve an absolutely continuous invariant measure $\mu$ \cite{Y92}. Let us denote $z_k=f^k(0)$. Under additional assumptions on the dynamics of the critical point\footnote{The map $f$ is of class $C^4$ and it must be:\begin{itemize}
    \item a {\em Collet-Eckmann} $S$-unimodal map verifying $|(f^k)'(f(c))|>\lambda_c^k,$ with $\lambda_c>1,$ $\forall k>H_0,$ where $H_0$ is a constant larger than $1.$
    \item it verifies the {\em Benedicks-Carleson} property: $\exists 0<\gamma<\frac{\log \lambda_c}{14}$ such that $|f^k(c)-c|>e^{-\gamma k}, \forall k>H_0$.
    \end{itemize}}
(see \cite{baladi} for details), their density has the form:
	$$
	h(x) = \psi_0(x) + \sum_{k\ge1} \frac{\phi_k(x)}{\sqrt{|x-z_k|}},
	$$
	with $\psi_0$ a bounded $C^1$ function, and for all $k\ge1$, $\phi_k$ is piece-wise $C^1$, and such that $||\phi_k||_{\infty} \le e^{-a k}$ for some $a>0$.\\
\begin{proposition}\label{bee}
Suppose that $f$ satisfies the hypothesis of Theorem 2.7 in \cite{baladi}, see also footnote 3.  Then, the generalized dimensions spectrum of $\mu$ is given by:

\begin{equation}
D_q=
\begin{cases}
1 \ \text{if} \  q<2\\
\frac{q}{2(q-1)} \ \text{otherwise}. 
\end{cases}
\end{equation}
\end{proposition}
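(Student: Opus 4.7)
My approach is to compute $D_q$ via formula (\ref{DDII}), which requires first determining the local dimension $d_\mu(x)$ pointwise and then the multifractal spectrum $f(\alpha):= \dim_H\{x : d_\mu(x) = \alpha\}$. The structure of the density $h$ strongly suggests that only two values of $\alpha$ occur: $\alpha = 1$ at points where $h$ is locally bounded and positive, and $\alpha = 1/2$ at each point of the (at most countable) effective pole set $P := \{z_K : \phi_K(z_K) \neq 0\}$. The key analytic ingredient making this heuristic rigorous is the exponential decay $\|\phi_k\|_\infty \le e^{-ak}$, which prevents the infinite tail of singularities from interfering with the local analysis at any point.

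The first step will be to show $d_\mu(x) = 1$ for every $x \notin P$. In a sufficiently small neighborhood of such an $x$, the series $\sum_{k \ge 1} \phi_k(y)/\sqrt{|y-z_k|}$ is uniformly bounded: only finitely many $z_k$ can lie within a fixed radius $\delta > 0$ of $x$, and for those $z_k$ with $|z_k - x| \ge \delta$ the corresponding term is bounded by $e^{-ak}\delta^{-1/2}$, which is summable in $k$. Hence $h$ is locally bounded, and combined with positivity of $h$ (which holds $\mu$-almost everywhere by Benedicks--Carleson theory) this gives $\mu(B(x,r)) \asymp r$ and $d_\mu(x) = 1$. Since $\{d_\mu = 1\}$ contains the support of $\mu$ minus a countable set, it has Hausdorff dimension $1$, so $f(1) = 1$.

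Next, for $z_K \in P$, I split
$$
\mu(B(z_K,r)) = \int_{z_K - r}^{z_K + r} \frac{\phi_K(y)}{\sqrt{|y - z_K|}}\,dy + R(r),
$$
where $R(r)$ collects $\psi_0$ and all other singular terms. Piecewise $C^1$ regularity of $\phi_K$ at $z_K$ gives $4\phi_K(z_K)\sqrt{r} + O(r)$ for the displayed integral, while the $\psi_0$ contribution is $O(r)$ and the remaining singular series is controlled by $\|\phi_k\|_\infty \le e^{-ak}$ together with the positive minimum distance between $z_K$ and the tail poles. Thus $\mu(B(z_K,r)) \asymp \sqrt{r}$ and $d_\mu(z_K) = 1/2$. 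The level set $\{d_\mu = 1/2\} = P$ is countable, giving $f(1/2) = 0$, and $f(\alpha) = -\infty$ for all other $\alpha$.

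Plugging this two-point spectrum into (\ref{DDII}) yields $D_q(q-1) = \min\{q - 1,\, q/2\}$, which equals $q - 1$ precisely when $q \le 2$ (giving $D_q = 1$) and equals $q/2$ for $q \ge 2$ (giving $D_q = q/(2(q-1))$), matching the claimed formula and agreeing continuously at the transition $q = 2$. The main obstacle will be the bookkeeping near an effective pole $z_K$: one must verify that any further $z_k$ approaching $z_K$ carries an exponentially small coefficient, so the auxiliary singular contributions stay $o(\sqrt{r})$ and the $\sqrt{r}$ term truly dominates. This is essentially a careful application of the bound $\|\phi_k\|_\infty \le e^{-ak}$ to control the remainder sum.
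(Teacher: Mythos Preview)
Your overall strategy via (\ref{DDII}) is the same as the paper's, and the computation of the Legendre transform at the end is correct. But the first step contains a genuine gap that the paper's proof is specifically designed to overcome.

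You claim that for $x\notin P$, only finitely many $z_k$ lie within a fixed radius $\delta>0$ of $x$, and hence $h$ is locally bounded near $x$. This is not true in general: for Benedicks--Carleson maps the critical orbit $\{z_k\}$ is typically dense in the support of $\mu$, so \emph{every} ball contains infinitely many $z_k$ and the density need not be locally bounded anywhere. The Benedicks--Carleson recurrence condition $|f^k(c)-c|>e^{-\gamma k}$ only keeps the orbit away from the critical point itself, not from an arbitrary $x$. Consequently your argument that $\mu(B(x,r))\asymp r$ for $x\notin P$ breaks down, and you have not established $d_\mu(x)=1$ off a set of zero Hausdorff dimension.

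The paper deals with exactly this difficulty. Rather than a fixed $\delta$, it introduces a scale-dependent threshold $\delta_n=m_n^{-\log n}$ with $m_n=n/(2a)$ and, for $r\approx e^{-n}$, splits the sum according to whether $|x-z_k|<\delta_n$ or not. For $x$ outside the set $\Gamma_n=\{x:\exists k<m_n,\ |x-z_k|<\delta_n\}$ one gets $\mu(B(x,r))\le a_4 r/\sqrt{\delta_n}$; since $-\log\delta_n$ is only of order $(\log\log(1/r))^2$, this still gives $d_\mu(x)=1$. The residual set $\Gamma=\bigcap_p\bigcup_{n>p}\Gamma_n$ is then shown to have Hausdorff dimension zero by a direct covering argument. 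This scale-adapted decomposition is the missing idea in your proposal; without it you cannot conclude $f(1)=1$. The same issue affects your remainder estimate near an effective pole $z_K$: other $z_k$ may accumulate at $z_K$, and controlling their contribution again requires balancing the exponential decay of $\|\phi_k\|_\infty$ against the index $k$ at which $z_k$ first enters a shrinking neighbourhood, exactly as in the paper's argument.
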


\begin{proof}	
In the following proof the constants $a_j, j=1,2..$ will be independent of $x$ and $r.$ We start by noticing that since $h$ is bounded away from 0 \cite{Y92}, the local dimensions are all smaller than or equal to 1. Now, the measure of a ball centered at $x$ of radius $r$ is given by
	$$
	\mu(B(x,r)):=\int_{x-r}^{x+r}\psi_0(y)dy+\sum_{k\ge1}  \int_{x-r}^{x+r}\frac{\phi_k(y)}{\sqrt{|y-z_k|}}dy.
	$$
\\
	Let us take $m_n=\frac n{2a}$ and $\delta_n=m_n^{-\log n}$. 
	Let $$	\Gamma_n = \{x:\exists k<m_n\textrm{ such that } |x-z_k|<\delta_n \}.$$
	Given $r>0$, we take $n$ the smaller integer such that $r<e^{-n}$. Since the functions $\phi_k$ are bounded, the  integrals in the sum are bounded above by $a_1||\phi_k||_{\infty}\sqrt{r}$ when $|x-z_k|<\delta_n$ and by $a_2||\phi_k||_{\infty}r/\sqrt{\delta_n}$ otherwise , where $a_1,a_2>0$. We get:
	\begin{equation}\label{dd}
	\mu(B(x,r)) \le 2r||\psi_0||_{\infty}+a_1\sqrt{r} \sum_{k:|x-z_k|<\delta_n} ||\phi_k||_{\infty} + a_2\frac{r}{\sqrt{\delta_n} }\sum_{k:|x-z_k|>\delta_n} ||\phi_k||_{\infty}\  
	\end{equation}
	For $x\not\in\Gamma_n$, the first sum starts at least at $m_n$ is therefore at most $a_3e^{-a m_n} \le a_3 \sqrt{r}$. The second geometric sum is bounded par $a_3$.
	Thus there exists $a_4>0$ such that, 
		$$
	\mu(B(x,r)) \le a_4 r + a_4\frac{r}{\sqrt{\delta_n} }.
	$$
	If $x\not\in\Gamma_n$ for $n$ large enough, then $d_\mu(x)=1$, since
	 $-\log \delta_n$ is of order $(\log\log(1/r))^2<\log(1/r)$, so the second term does not affect the dimension. Therefore $d_\mu(x)=1$ in the set $$\displaystyle G= \bigcup_{p}\bigcap_{n>p}\Gamma_{n}^c.$$
	Let $\Gamma = G^c =\bigcap_{p}\bigcup_{n>p}\Gamma_{n}$, the set of $x$ such that there exists an infinity of $n$ such that $x\in\Gamma_n$. $\Gamma $ is covered by the union of balls 
	$$
	\bigcup_n \bigcup_{k<m_n} B(z_k,\delta_n).
	$$
	Now, for all $\epsilon>0$, we have 
	$$
	\sum_n \sum_{k<m_n} \delta_n^\epsilon = \sum_n (\frac n{2a})^{1-\epsilon \log n}<\infty.
	$$
	So the Hausdorff measure $H^\epsilon(\Gamma)$ is finite, which show that $\dim_H(\Gamma)\le\epsilon.$\\

It is easily seen from (\ref{dd}) that for all $x\in[-1,1],$
\begin{equation} 
\mu(B(x,r)) \le a_5\sqrt{r}.
\end{equation}
This shows that the infimum of the local dimensions is larger or equal to $1/2$. On the other hand, since for all $k$, $\phi_k$ is $C^1$ by part, the singularities are of type $|x-z_k|^{-1/2}$. Therefore, if the density admits a singularity at $z_k$,
\begin{equation} 
\mu(B(z_k,r))\ge a_6\sqrt{r}.
\end{equation}
Combining the last two estimates, we get that 
$
d_\mu(z_k)=1/2.
$
We can now compute the generalized dimensions. $D_q(q-1)$ is defined as the Legendre transform of the function $f(\alpha):=d_H\{x; d_\mu(x)=\alpha\},$ where $d_H$ denotes the Hausdorff dimension. In our case, we have $f(1)=1$ and, for all $\alpha<1$, either $f(\alpha)=0$, or $f(\alpha)$ is not defined, so that 
$$D_q=(q-1)^{-1} \inf_\alpha\{q\alpha,q-1\}.$$
Since the density is bounded from below \cite{Y92}, the local dimensions are bounded above by 1, and since $\min(d_\mu)=1/2,$ we obtain our result.

\end{proof}

For our class of quadratic maps (\ref{map}) depending upon the parameter $c$, the assumptions stated in the footnote 3 are satisfied for a positive measure set of values of the parameter $c$, \cite{BBCC, BBVV}. It is therefore plausible, although not certain, that the previous proposition  applies to the physical measure of our master map. As the latter has a  density  bounded away from 0 \cite{ke}, we surely expect its generalized dimensions spectrum to be constant for negative values of $q$ and not differentiable, although its numerical approximation shows a smooth behavior  (see Fig. 5.1 in \cite{LA}, which investigated the fully quadratic map with only one divergent singularity for the density). It is enough for the measure of the slave system to have a density bounded away from 0 as $k$ approaches 1, to yield $D_q=1$ for negative $q$. This would explain the observed {\em zipper effect} described in \cite{TS} for this particular example.\\

\subsection{Densities with a singularity spectrum defined on an interval}
In this section, we construct a density having singularities distributed on a Cantor set, that has a non trivial singularity spectrum. This example does not relate directly to the density of unimodal maps, but is intended to show that non trivial multifractal features can arise from absolutely continuous invariant measures. We sketch the main steps of the proof whose details could be completed with arguments close to those used in the proof of Proposition \ref{bee}. With the symbol $a\asymp b$ we mean that $a$ is bounded from below and above as $C_1a\le b\le C_2a,$ with $C_1, C_2$ two positive constants. \\

Let $T$ be a  $C^2$ piecewise expanding map of the unit circle $I$ with three branches, coded by 0, 1 and 2.  Each  $w\in\{0,1,2\}^\mathbb{N}$ encodes a unique point $x\in I$. We note $u=-\log |T'|$ and $K$ the Cantor set constituted of the set of points whose codes do not contain 1. We denote $P(u|S)$ the topological pressure of $u$ on a set $S$. Let $p$ be the pressure of $u$ on $K$ and $\mu_u$ the Gibbs measure of $u$ on $K$. Note that $p=P(u|K)<P(u|I)=0$.
We fix $\alpha\in (0,-p)$ and define a density with respect to the Lebesgue measure, for $x\notin K$, as 
$$h(w)=\exp[-n (p+ \alpha)],$$ 
where $x$ is coded by $w$ and $n$ is the smallest integer such that $w_n=1$. The measure $\mu$ with density $h$ with respect to Lebesgue is finite,
and has the following properties: for $w$ coding a point in $K$ :
\begin{enumerate}
   \item $\mu(Z_n^w )\asymp \mu_u(Z_n^w) \exp(-\alpha n)$.
\item The diameter of this cylinder $\asymp \exp(n p) \mu_u(Z_n^w)$.
\item  $\mu_u(Z_n^w)\asymp\mu(B(x,r))$, where $n$ is the smallest integer such that $\diam Z_n^w<r$.
\end{enumerate}

This implies that the local dimension for the measure $\mu$ of points in $K$ satisfies
$$
d_\mu(x) =  1-(p+\alpha)\left( \lim_{n\to\infty} \frac1n \sum_{k=0}^{n-1}u(T^k(x))\right)^{-1}.
$$
Therefore the dimension spectrum of the measure $\mu$ is determined by the Lyapunov multifractal spectrum of the map $T$ on $K$. If the latter 
 has a non trivial multifractal spectrum, the set $L_\lambda$ of points $x\in K$ such that the local Lyapunov exponent 
 $$
 \lambda(x) := \lim_{n\to\infty} \frac1n \log (T^n)'(x)=\lambda
 $$
 has a Hausdorff dimension $g(\lambda)$ which is non trivial in an interval of values of $\lambda$, and for these points the local dimension of the measure $\mu$ is exactly $$1+\frac{p+\alpha}{\lambda}<1.$$

We obtain
$$f(1+\frac{p+\alpha}{\lambda})=g(\lambda),$$ for an interval of values of $\lambda$.\\

A tractable example is given by a map which is affine on the branches $0$ and $2$, with different slopes $e^{\lambda_0}$ and $e^{\lambda_2}$. It is known there \cite{pesin} that the multifractal set $L_\lambda$ carries a Bernoulli measure of full dimension $g(\lambda)$, allowing an explicit computation.\\

Note that in this example, the maximum of the local dimensions is 1, and is achieved for the points $x\in I\setminus K$. As a consequence, we have $f(1)=1$, so that $D_q$ is constant equal to 1 for $q\le 0$. On the other hand, for $q$ large enough, $D_q$ can be computed from the generalized dimensions of the Gibbs measure $\mu_u$ on $K$, and exhibits a smooth non trivial behavior whenever $\mu_u$ does. We do not know whether it is possible to construct a density exhibiting a $D_q$ spectrum that is smooth and whose derivative does not vanish in $\mathbb{R}$.

\section{A random analog}
Our proposition 3.1 suggests that the sequence of empirical measures $\nu_n$ for the slave non-autonomous evolution converges weakly to $\mu.$ Such an evolution could be understood in another way. Consider the logistic master map $T;$ at each step $x_n$ we now add a number $(1-k)\omega_n$, where $\omega_n \in [-1,1]$  is {\em taken with the probability distribution given by the invariant measure $\tilde{\mu}$ of the master map $\tilde{T}(x)=c_2(1-2x^2),$} see above. Suppose moreover that the $\{\omega_n\}_{n\ge 1}$ are mutually independent \footnote{This is of course not true when if $\omega_n$ is distributed as $\tilde{T}^n(x)$, with  $x$ chosen $Leb$-a.e.,  but it becomes asymptotically true since $T$ mixes exponentially fast with respect to $\tilde{\mu}.$}. We thus get a random dynamical system perturbed with additive noise
$$
x_{n+1}=kx_n+(1-k)\omega_n.
$$
It is well known that such random dynamical systems admits a {\em stationary} probability measure $\nu_s.$ For a large class of maps  admitting invariant sets  and   supporting eventually singular measures, the noise has a regularizing  effect making very often the stationary measure absolutely continuous. Moreover the stationary measure is the weak-limit of the sequence
$$
\frac1n\sum_{i=0}^{n-1}\delta_{kx_i+(1-k)\omega_i},
$$
for $\nu_s$ almost all initial condition $x_0$ and almost all realization $\{\omega_n\}_{n\ge 1}$.  Therefore $\nu_s$ could be considered as the weak limit of the sequence of empirical measures  $\nu_n$ constructed in the previous section,\footnote{ Notice that in the limit of zero noise ($k\rightarrow 1$ in our case), the smooth measure $\nu_s$ could converge weakly to an eventual  singular measure and this is coherent with Proposition 3.1. This is a typical weak stochastic stability result.} and therefore the latter   converge to an  absolutely continuous measure. This is confirmed by Fig. 6, which shows the support of the limiting measure of the $\nu_n;$ for $k>0.2$ the histogram is compatible with the presence of a smooth density\footnote{It is however interesting to observe that the empirical measure is not always absolutely continuous, although  it continues to converge weakly to the physical measure $\mu$ of the master map. This is shown in Fig. 7.}.  

\begin{figure}[htbp]
    \centering
    \includegraphics[height=2.6in]{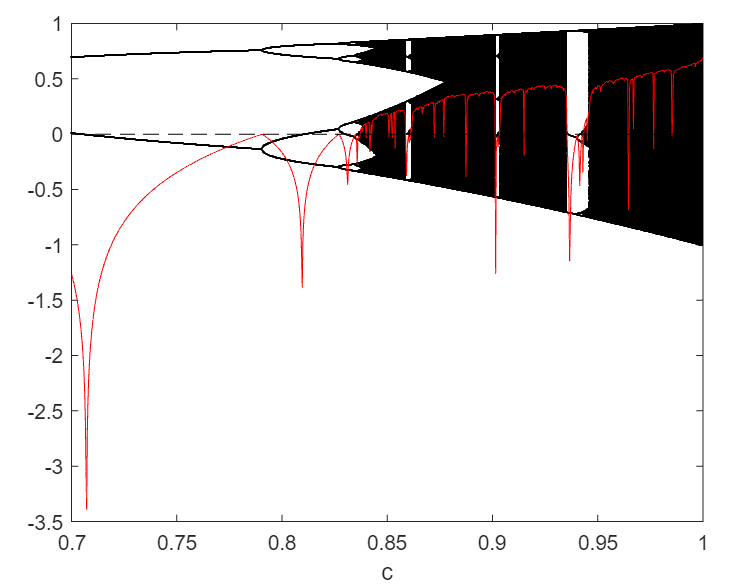}
    \caption{Bifurcation diagram for the map $T(x)=c(1-2x^2)$ and its associated $\lambda_T$ computed over different values of the parameter $c$.}
    \label{bif}
\end{figure}

To gauge the convergence of the measure $\nu_n$ to $\mu$, we plot in figure 4  the evolution of the empirical Lyapunov exponent  (we set $\tilde{\lambda}_T$ the limiting value): $$\tilde{\lambda}_T^{n}:=\frac{1}{n}\sum_{i=0}^{n-1}\log|T'(y_i)|, \ y_{i}=(1-k)c_2(1-2y_{i-1}^2)+c_1k(1-2x_{i-1}^2), i\ge 1
,$$ with respect to the master parameter $c_1.$


For the  values of $c_1$ and $c_2$ prescribed in \cite{TS}, the dependence of $\tilde{\lambda}_T$ vs $k$ is made explicit in Fig. 5. We see that when $k\rightarrow 1$ the empirical Lyapunov exponent $\tilde{\lambda}_T$ converges to $\lambda_T.$  

This supports the conclusions of Proposition 3.1, although in principle it could not be applied to $\log|T'|,$ which is not even bounded on $[-1,1].$   \\
In Fig. \ref{d}, we represent the densities associated with the measure $\nu_n$ at $k=0$ and $k=0.5$ and $k=1$ (at which $\nu_n=\mu)$. For all these values of $k$, the density seems to have singularities on a large set of points, which may be compatible with the simple formal models presented in section \ref{mss}, and therefore  with the findings of a non trivial $D_q$ spectrum, as found in \cite{TS}.

\section{Conclusions}
The paper \cite{TS} used the spectrum of the generalized dimensions to follow the process of synchronisation in master/slave systems. We showed that for the parameter values of the quadratic map  considered in the aforementioned paper, the master map has an absolutely continuous invariant measure and the attractor is not a Cantor set. We did not find in the literature  any result on the multifractal spectrum of such a measure. We instead gave examples of densities allowing a multifractal structure. In those cases the function $q\rightarrow D_q$ is continuous but not smooth, which is not what was observed in \cite{TS}, unless smoothness was a consequence of numerical approximations. Moreover our  examples suggest that the dimensions are constant for negative $q,$ since the invariant densities are bounded away from zero,  which supports the presence of the zipper effect. \\
We presented a detailed study of the Lyapunov exponent and we believe that it is a much more reliable technique, besides to be more theoretically founded,  to describe the synchronisation process.
\section*{Acknowledgments}
The research of SV was supported by the project {\em Dynamics and Information Research Institute} within the agreement between UniCredit Bank and Scuola Normale Superiore di Pisa and by the Laboratoire International Associé LIA LYSM, of the French CNRS and  INdAM (Italy).
MG is partially supported by G.N.A.M.P.A..
TC was partially supported by CMUP, which is financed by national funds through FCT – Fundaçao para a
Ciência e Tecnologia, I.P., under the project with reference UIDB/00144/2020.\\
TC thanks Dylan Bansard-Tresse for diverse discussions and recommendations, and Jorge M. Freitas for his comments and his expertise on unimodal maps.



\begin{figure*}[t!]
    \centering
    \begin{subfigure}[t]{0.5\textwidth}
        \centering
        \includegraphics[height=2.5in]{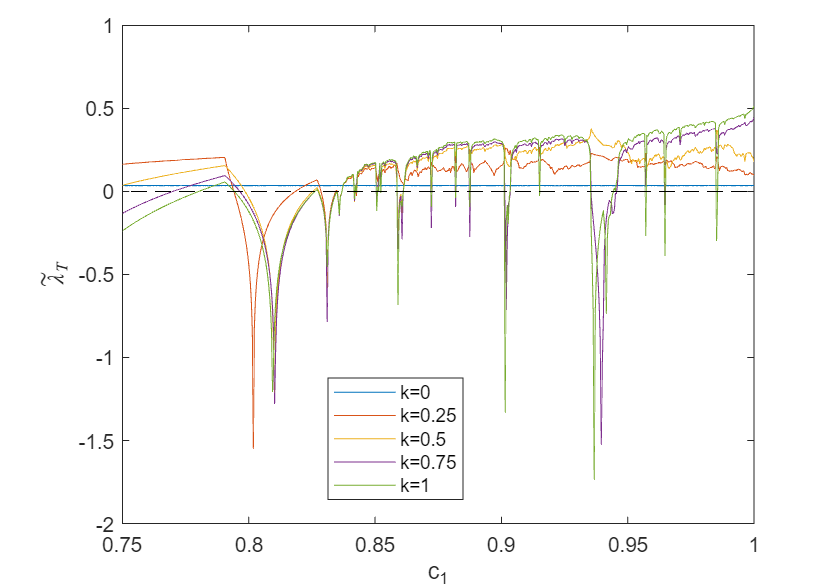}
        \caption{}
    \end{subfigure}%
    ~ 
    \begin{subfigure}[t]{0.5\textwidth}
        \centering
        \includegraphics[height=2.5in]{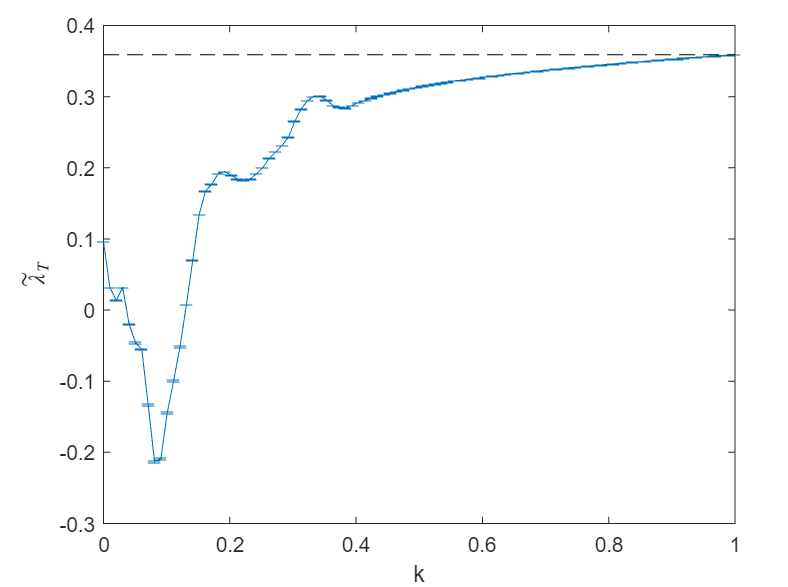}
        \caption{}
    \end{subfigure}
    \caption{Evolution of $\tilde{\lambda}_T$ with $c_1$ for different values of $k$ (left) and with $k$ for the fixed value of $c_1=0.89$ (right). For both figures, we took for $c_2=0.8373351$.}
\end{figure*}

\begin{figure*}[t!]
    \centering
    \begin{subfigure}[t]{0.5\textwidth}
        \centering
        \includegraphics[height=2.5in]{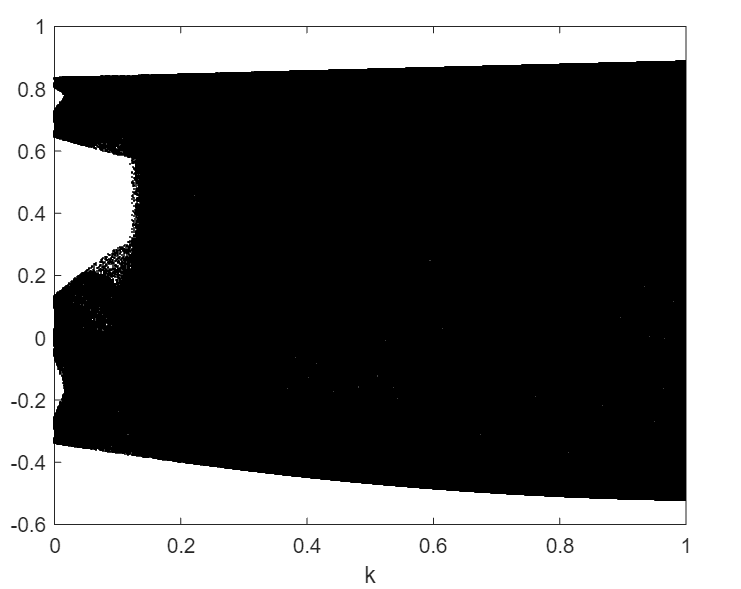}
        \caption{}
    \end{subfigure}%
    ~ 
    \begin{subfigure}[t]{0.5\textwidth}
        \centering
        \includegraphics[height=2.5in]{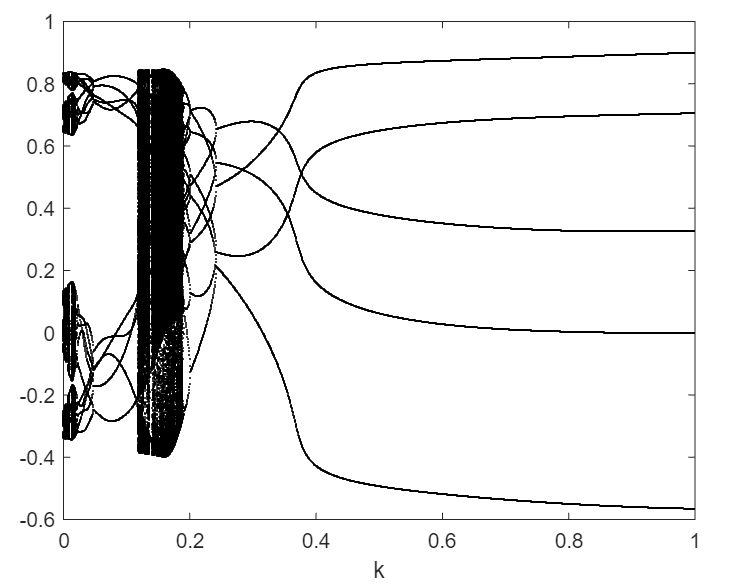}
        \caption{}
    \end{subfigure}
    \caption{Bifurcation diagram of the dynamics of $y_n$ with the parameter $k$, for $c_2=0.8373351$ and $c_1=0.89$ (left) and $c_1=0.901502$ (right). On the left figure, the master measure is supported on an interval, and on the right on a set of 5 points.}
\end{figure*}

\begin{figure}
     \centering
     \begin{subfigure}[b]{0.3\textwidth}
         \centering
         \includegraphics[height=1.7in]{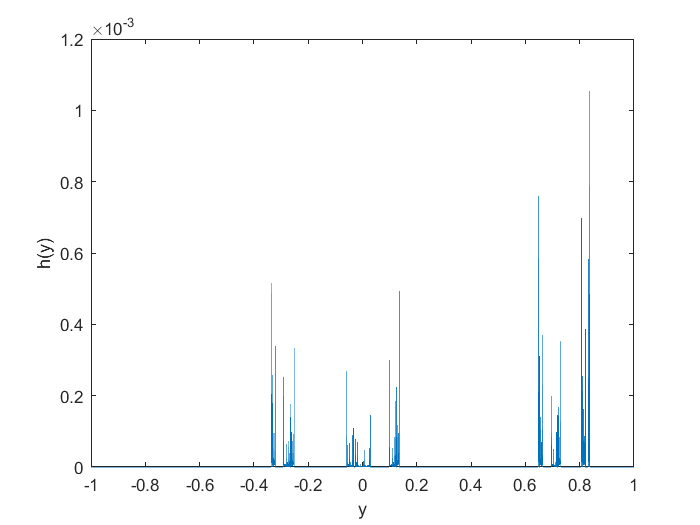}
         \caption{}
         \label{d3}
     \end{subfigure}
     \hfill
     \begin{subfigure}[b]{0.3\textwidth}
         \centering
         \includegraphics[height=1.7in]{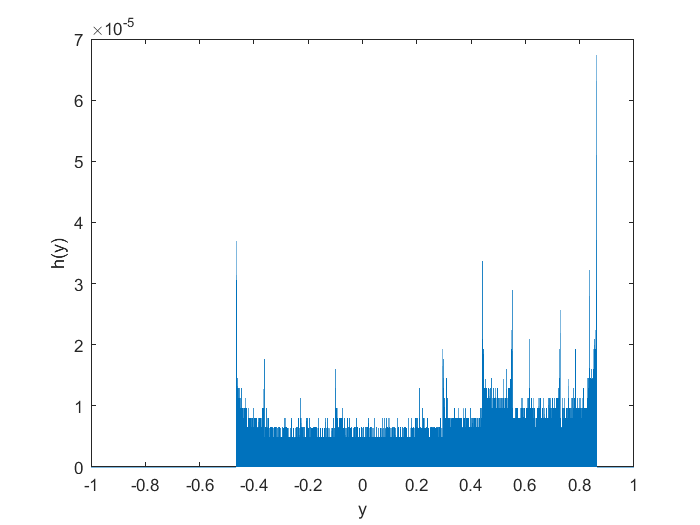}
         \caption{}
         \label{d2}
     \end{subfigure}
     \hfill
     \begin{subfigure}[b]{0.3\textwidth}
         \centering
         \includegraphics[height=1.7in]{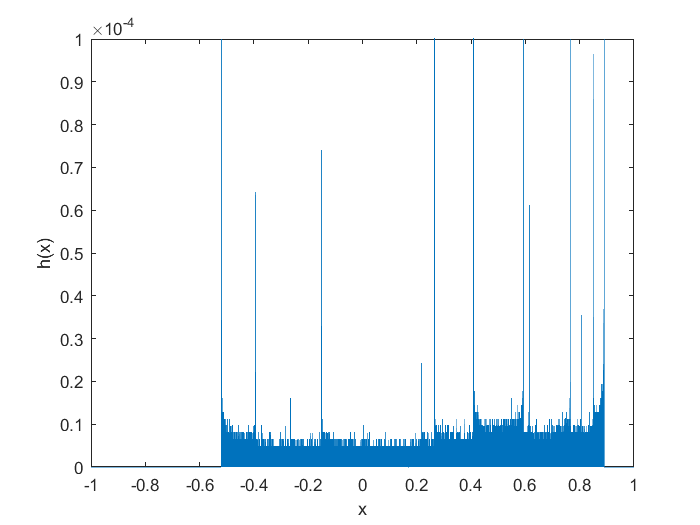}
         \caption{}
         \label{d1}
     \end{subfigure}
        \caption{Numerical estimation of the density of the measure $\nu$ for $k=0$ (left), $k=0.5$ (center) and $k=1$ (right).}
        \label{d}
\end{figure}


\begin{thebibliography}{00}
\bibitem{LA} Leif Anderson, Improved Methods for Calculating the Multifractal Spectrum for Small
Data Sets, PhD Thesis, Colorado State University
Fort Collins, Colorado
Spring 2014, https://ui.adsabs.harvard.edu/abs/2014PhDT.......106A/abstract
\bibitem{BBCC}M. Benedicks, L. Carleson, {\em The dynamics of the Hénon map}, Annals of Mathematics, Vol. 133, pp 73-169,  (1991).
\bibitem{L}A. M. Blokh and M. Yu. Lyubich, {\em Measurable dynamics of S-unimodal maps of the
interval}, Ann. Scuola Norm. Sup. Pisa Cl. Sci. 4, 24 (1991), pp. 545–573.
\bibitem{dem} Th. Caby, D. Faranda, G. Mantica, S. Vaienti, P. Yiou, {\em Generalized dimensions, large deviations and the distribution of rare
events}, PHYSICA D, Vol. 40015, 132143, (2019).
\bibitem{Y92} L. -S. Young, Decay of correlations for certain quadratic maps, Communications in Mathematical Physics volume 146, pages123–138 (1992)
\bibitem{BBVV}V. Baladi, M.Viana. {\em Strong stochastic stability and rate of
mixing for unimodal maps},  Annales Scientifiques de l’Ecole Normale
Supèerieure. Vol. 29. 4. 1996, pp. 483–517.
\bibitem{baladi} V. Baladi, D. Smania, Linear response for smooth deformations of generic nonuniformly hyperbolic unimodal maps, Annales scientifiques de l'École Normale Supérieure, Série 4, Tome 45 (2012) no. 6, pp. 861-926.
\bibitem{DQ}T.C. Halsey, M.H. Jensen, L.P.  Kadanoff, I. Procaccia, B.I.  Shraiman, {\em Fractal measures and their singularities: The characterization
of strange sets}. Phys. Rev. A 33(2), 1141 (1986).
\bibitem{ke} G. Keller, {\em Exponents, attractors and Hopf decompositions for interval maps}, Ergodic
Theory Dynam. Systems, 10 (1990), pp. 717–744.
\bibitem{MS} W. de Melo and S. van Strien, One-Dimensional Dynamics, Springer-Verlag,
Berlin, Heidelberg, 1993.
\bibitem{TS}Nir Lahav1, Irene Sendina‑Nadal, Chittaranjan Hens, Baruch Ksherim, Baruch Barzel,
Reuven Cohen, Stefano Boccaletti, {\em Topological synchronization
of chaotic systems}, Scientific Reports, (2022) 12:2508 
\bibitem{PP1} {\em Topological synchronization of chaotic systems}, https://phys.org/news/2022-04-topological-synchronization-chaotic.html
\bibitem{PP2} {\em Scientists reveal for first time the exact process by which chaotic systems synchronize}, https://phys.org/news/2019-01-scientists-reveal-exact-chaotic-synchronize.html
\bibitem{pesin}  Y. Pesin, Dimension Theory in Dynamical Systems, The University of
Chicago Press, Chicago, 1997.
\bibitem{th}  Hans Thunberg. {\em Periodicity versus Chaos in One-Dimensional Dynamics},  SIAM Review 43.1 (2001), pp. 3–30.
\end{thebibliography}
\end{document}